\theoremstyle{plain}
\newtheorem{thm}{\protect\theoremname}
\theoremstyle{remark}
\theoremstyle{definition}
\newtheorem{defn}[thm]{\protect\definitionname}
\theoremstyle{plain}
\newtheorem{conjecture}[thm]{\protect\conjecturename}
\newenvironment{proof}[1][\protect\proofname]{\par
\normalfont\topsep6\p@\@plus6\p@\relax
\trivlist
\itemindent\parindent
\item[\hskip\labelsep\scshape #1]\ignorespaces
}{%
\endtrivlist\@endpefalse
}
\providecommand{\proofname}{Proof}
\newtheorem*{remark}{Remark}
\providecommand{\conjecturename}{Conjecture}
\providecommand{\definitionname}{Definition}
\providecommand{\theoremname}{Theorem}
\title{Discrete time approximation of a COGARCH(p,q) model and its estimation (Preliminary Version)} 
\author{Stefano M. Iacus\footnote{Department of Economics, Management and Quantitative Methods.
University of Milan. CREST Japan Science and Technology Agency. E-mail: stefano.iacus@unimi.it.},  
 Lorenzo Mercuri\footnote{Department of Economics, Management and Quantitative Methods. University of Milan.
University of Milan. E-mail: lorenzo.mercuri@unimi.it.}  and Edit Rroji\footnote{Department of Economics, Business, Mathematical and Statistical Sciences. University of Trieste. E-mail: erroji@units.it.}}
\begin{document}
\maketitle 

\begin{abstract} 
In this paper, we construct a sequence of discrete time stochastic processes that converges in probability and in the Skorokhod metric to a COGARCH(p,q) model. The result is  useful for the estimation of the continuous model defined for irregularly spaced time series data. The estimation procedure is based on the maximization of a pseudo log-likelihood function and is implemented in the \texttt{yuima} package.
\end{abstract} 

\section{Introduction}
%
The COGARCH(1,1) model has been introduced by \cite{Cogarch2004} as a continuous time counterpart of the GARCH(1,1) process. The continuous time model preserves the main features of the GARCH model since the same underlying noise drives the variance and the return processes. For the COGARCH(1,1) case, different methods for its estimation have been proposed. For instance, \cite{Haug2007} develop a procedure based on the matching of theoretical and empirical moments. \cite{maller2008garch} use an approximation scheme for obtaining estimates of parameters through the maximization of a pseudo-loglikelihood function while \cite{muller2010mcmc} develop a Markov Chain Monte Carlo estimation
procedure based on the same approximation scheme.\newline 
The  COGARCH(1,1) model has been generalized to the higher order case by \cite{Chadraa2010Thesis} and \cite{brockwell2006}. Based on our knowledge, this is the only estimation method for higher order models and it is based on the matching of empirical and theoretical moments.\newline
In this paper, we construct a sequence of discrete time stochastic processes that converges in probability and in the Skorokhod metric to a COGARCH(p,q) model. Our results generalize the approach in  \cite{maller2008garch} for building a sequence of discrete time stochastic processes based on a GARCH(1,1) model that converges in the Skorokhod metric to its continuous counterpart, i.e COGARCH(1,1) model.\newline
Results derived for a COGARCH(p,q) model in \cite{Chadraa2010Thesis} are used in this paper for extending the estimation procedure based on the maximization of the pseudo log-likelihood function. This estimation method is then implemented in the \texttt{yuima} package available on CRAN \citep[See][ for more details on \texttt{yuima} package]{Brousteetal2013,BrousteIacus2013,IacusMercur2015,iacus2015estimation}. \newline
The outline of the paper is as follows. In Section 
\ref{sec:1} we review some useful properties needed in in Section \ref{sec:2} where we introduce a discrete version of our process and prove the convergence to the COGARCH(p,q) model using the Skorokhod metric.

\section{Preliminaries}
\label{sec:1}

In this section we review useful results for obtaining a sequence
of discrete time processes that converges in Skorokhod distance \citep[see for example]{Billingsley68book} to a COGARCH(p,q) model.
\begin{defn}
The sequence of random vectors $Q_{n}$ is uniformly convergent in
probability to $Q$ if and only if: 
\begin{equation}
\underset{\theta\in\Theta}{\sup}\left\Vert Q_{n,\theta}-Q_{\theta}\right\Vert \stackrel{P}{\rightarrow}0,\label{eq:ucp}
\end{equation}
where $\left\Vert \cdot \right\Vert $ is the Euclidean norm. 
\end{defn}
\begin{conjecture}
The definition holds also for any vector norm $\left\Vert \cdot \right\Vert _{A}$
induced by an invertible matrix $A$, i.e. $\left\Vert x\right\Vert _{A}$=$\left\Vert Ax\right\Vert $ where
$A$ is a non singular matrix.\end{conjecture}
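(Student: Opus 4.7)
The plan is to reduce the conjecture to the equivalence of norms on a finite-dimensional vector space, combined with the elementary fact that convergence in probability to zero is preserved under multiplication by a positive constant. Since $A$ is a non-singular matrix, the induced norm satisfies
\[
\|x\|_A = \|Ax\| \leq \|A\|_{\mathrm{op}}\, \|x\|,
\]
where $\|A\|_{\mathrm{op}}$ denotes the operator norm of $A$ with respect to the Euclidean norm. This inequality is a consequence of the submultiplicativity of the operator norm; the reverse inequality $\|x\| \leq \|A^{-1}\|_{\mathrm{op}}\, \|x\|_A$ follows by applying the same estimate to $A^{-1}$, but only the first bound is actually needed for the implication we want to prove.

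First I would apply the above inequality pointwise in $\theta$ to the deviation $Q_{n,\theta} - Q_\theta$, obtaining
\[
\sup_{\theta \in \Theta} \|Q_{n,\theta} - Q_\theta\|_A \;\leq\; \|A\|_{\mathrm{op}} \cdot \sup_{\theta \in \Theta} \|Q_{n,\theta} - Q_\theta\|.
\]
Next I would invoke the hypothesis \eqref{eq:ucp}, which asserts that the Euclidean-norm supremum on the right-hand side tends to zero in probability. Since $\|A\|_{\mathrm{op}}$ is a deterministic finite constant (as $A$ is a fixed invertible matrix), multiplication by this constant preserves convergence in probability to zero. Hence the left-hand side also tends to zero in probability, which is precisely the statement of uniform convergence in probability with respect to $\|\cdot\|_A$.

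I do not foresee a serious obstacle: the argument is essentially norm equivalence plus linearity of stochastic convergence to zero. The only point worth stating carefully is that the bound on $\|A\|_{\mathrm{op}}$ is uniform in $\theta$, which is automatic because $A$ itself does not depend on $\theta$. If one wished to extend the conjecture to matrices $A_\theta$ varying with the parameter, a further uniform boundedness assumption on $\|A_\theta\|_{\mathrm{op}}$ over $\Theta$ would be required, but this lies outside the scope of the stated conjecture.
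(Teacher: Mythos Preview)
Your argument is correct: norm equivalence on a finite-dimensional space together with the fact that convergence in probability to zero is preserved under multiplication by a deterministic constant is exactly what is needed, and your write-up handles both points cleanly. There is, however, nothing to compare against: in the paper this statement is left as a \emph{conjecture} with no accompanying proof, and it is never revisited. Your proposal therefore supplies an argument the paper does not give.

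One small remark on scope. The phrase ``the definition holds also for $\|\cdot\|_A$'' is most naturally read as asserting that the two formulations of uniform convergence in probability are \emph{equivalent}, not merely that the Euclidean version implies the $\|\cdot\|_A$ version. You already note the reverse inequality $\|x\| \le \|A^{-1}\|_{\mathrm{op}}\,\|x\|_A$ but then set it aside; it would cost you one extra line to run the same argument in the other direction and thereby cover both readings of the statement. Apart from that, nothing is missing.
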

\begin{defn}
Let $\mathcal{\left\Vert  \cdot \right\Vert }$ be
a norm on $\mathcal{R}^{n}$, we introduce induced norm $\left\Vert \cdot \right\Vert_M $
as a function from $\mathcal{R}^{n\times n}$ to $\mathcal{R}_{+}$ defined
as:
\[
\left\Vert A\right\Vert_M :=\underset{\left\Vert x\right\Vert \neq0}{\sup}\frac{\left\Vert Ax\right\Vert}{\left\Vert x\right\Vert}=\underset{\left\Vert z\right\Vert =1}{\sup}\left\Vert Az\right\Vert
\]
where $A\in\mathcal{R}^{q\times q}$.\end{defn}
\begin{thm}
The induced norm $\left\Vert \cdot \right\Vert_M $ satisfies the following
properties \citep[see ][]{BookDesoer1975}:
\newline
1) $\left\Vert Ax\right\Vert \leq\left\Vert A\right\Vert_M \left\Vert x\right\Vert$
\newline
2) $\left\Vert \alpha A\right\Vert_M \leq\left|\alpha\right|\left\Vert A\right\Vert $
\newline
3) $\left\Vert A+B\right\Vert_M \leq\left\Vert A\right\Vert_M +\left\Vert B\right\Vert_M $
\newline
4) $\left\Vert AB\right\Vert_M \leq\left\Vert A\right\Vert_M \left\Vert B\right\Vert_M $, \newline
where $A\in\mathcal{R}^{q\times q}$, $B\in\mathcal{R}^{q\times q}$ and $\alpha$ is a scalar.
\end{thm}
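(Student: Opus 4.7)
The plan is to derive all four properties directly from the definition
\[
\left\Vert A\right\Vert _{M}=\sup_{\left\Vert x\right\Vert \neq0}\frac{\left\Vert Ax\right\Vert }{\left\Vert x\right\Vert }
\]
together with the axioms that $\left\Vert \cdot\right\Vert $ already satisfies on $\mathcal{R}^{n}$ (positivity, homogeneity, triangle inequality). No additional machinery is needed; the argument is a routine manipulation of the supremum.

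First I would prove property 1, since the remaining properties build on it. For $x=0$ the inequality is trivial, and for $x\neq0$ the quotient $\left\Vert Ax\right\Vert /\left\Vert x\right\Vert $ is bounded above by its supremum $\left\Vert A\right\Vert _{M}$, which after clearing the denominator yields $\left\Vert Ax\right\Vert \leq\left\Vert A\right\Vert _{M}\left\Vert x\right\Vert $. Property 2 then follows by inserting $\alpha A$ into the definition and pulling the scalar out of $\left\Vert \alpha Ax\right\Vert =|\alpha|\left\Vert Ax\right\Vert $ before taking the supremum over the unit sphere.

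For property 3 I would apply the triangle inequality of the underlying norm to $\left\Vert (A+B)x\right\Vert \leq\left\Vert Ax\right\Vert +\left\Vert Bx\right\Vert $, divide by $\left\Vert x\right\Vert $, and then use the elementary fact that the supremum of a sum is bounded by the sum of the suprema to conclude $\left\Vert A+B\right\Vert _{M}\leq\left\Vert A\right\Vert _{M}+\left\Vert B\right\Vert _{M}$. Property 4 is the submultiplicativity; here I would apply property 1 twice, first writing $\left\Vert ABx\right\Vert =\left\Vert A(Bx)\right\Vert \leq\left\Vert A\right\Vert _{M}\left\Vert Bx\right\Vert $ and then $\left\Vert Bx\right\Vert \leq\left\Vert B\right\Vert _{M}\left\Vert x\right\Vert $, so that $\left\Vert ABx\right\Vert /\left\Vert x\right\Vert \leq\left\Vert A\right\Vert _{M}\left\Vert B\right\Vert _{M}$ uniformly in $x\neq0$; taking the supremum on the left gives the claim.

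The only minor subtlety — and the nearest thing to an obstacle — is justifying the passage from pointwise bounds to bounds on the supremum in properties 3 and 4, which requires noting that the quantities $\left\Vert A\right\Vert _{M}$ and $\left\Vert B\right\Vert _{M}$ are indeed finite (this uses that $A,B$ are finite-dimensional operators, so the supremum is attained on the compact unit sphere). Once finiteness is granted, the four inequalities follow in a single sweep and the verification that $\left\Vert \cdot\right\Vert _{M}$ is a submultiplicative matrix norm is complete.
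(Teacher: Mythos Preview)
Your argument is correct and is the standard textbook verification of these properties directly from the definition of the induced norm. Note, however, that the paper does not actually supply a proof of this theorem: it simply states the four properties and cites \cite{BookDesoer1975} for them. So there is no ``paper's own proof'' to compare against; you have filled in what the paper leaves as a reference. Your handling of finiteness via compactness of the unit sphere in $\mathcal{R}^{q}$ is the right justification, and the chain of inequalities for properties~1--4 is exactly as one finds in Desoer--Vidyasagar or any linear-algebra text.
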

We have also that any induced vector norm satisfies the following inequality:
\citep[see][]{Denis2002}:
\begin{equation}
\left\Vert \frac{e^{At}-I}{t}-A\right\Vert_M \leq\frac{e^{\left\Vert At\right\Vert_M }-1-\left\Vert At\right\Vert_M }{\left|t\right|},\ t\in\mathcal{R}.\label{eq:InqInducedVectorNorm}
\end{equation}
\begin{defn}
Let $\left\Vert \cdot \right\Vert_M $ be the induced vector norm by the norm
$\left\Vert \cdot \right\Vert$ defined on $\mathcal{R}^{n}$, the logarithmic norm $\mu\left(A\right)$ \citep[see][ for its properties]{Strom1975LogarithmicNorm} is defined as:
\[
\mu\left(A\right):=\underset{t\rightarrow 0^{+}}{\lim}\frac{\left\Vert I+At\right\Vert_M -1}{t}.
\]
\end{defn}
\begin{thm}
For the logarithmic norm the following inequalities hold:
\[
\left\Vert e^{At}\right\Vert_M \leq e^{\mu\left(A\right)t}\leq e^{\left\Vert A\right\Vert_M t}
\]
\end{thm}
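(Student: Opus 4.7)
The plan is to prove the two inequalities separately, handling the easier one first.

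For the right inequality $\mu(A) \le \|A\|_M$, I would use the subadditivity and scaling properties (items 2 and 3 of the preceding theorem) together with the triangle inequality for the induced norm. Specifically, $\|I + At\|_M \le \|I\|_M + \|At\|_M = 1 + t\|A\|_M$ for $t > 0$, so
\[
\frac{\|I+At\|_M - 1}{t} \le \|A\|_M,
\]
and letting $t \to 0^+$ in the definition of $\mu(A)$ finishes the bound. This step is essentially immediate.

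For the left inequality $\|e^{At}\|_M \le e^{\mu(A) t}$, I would set $\phi(t) := \|e^{At}\|_M$ and show that $\phi$ satisfies the differential inequality $D^+ \phi(t) \le \mu(A) \phi(t)$, where $D^+$ denotes the upper right Dini derivative (needed because $\phi$ is not a priori differentiable, only Lipschitz). The key algebraic step is the factorization $e^{A(t+h)} = (I + Ah) e^{At} + R(h) e^{At}$ with $\|R(h)\|_M = o(h)$, which follows from the Taylor expansion of the matrix exponential and property 1 of the induced norm. Applying submultiplicativity (property 4) and the triangle inequality then yields
\[
\phi(t+h) \le \|I + Ah\|_M\, \phi(t) + o(h),
\]
so that
\[
\frac{\phi(t+h) - \phi(t)}{h} \le \frac{\|I + Ah\|_M - 1}{h}\, \phi(t) + o(1),
\]
and passing to $\limsup$ as $h \to 0^+$ gives $D^+\phi(t) \le \mu(A)\, \phi(t)$.

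The final step is a Gronwall-type comparison for Dini derivatives: since $\phi(0) = \|I\|_M = 1$ and $D^+ \phi \le \mu(A)\, \phi$, the function $t \mapsto \phi(t) e^{-\mu(A) t}$ has nonpositive upper right Dini derivative, hence is nonincreasing, which gives $\phi(t) \le e^{\mu(A) t}$. I expect the main obstacle to be justifying the differential inequality rigorously without appealing to differentiability of $\phi$; this is where one has to invoke the standard fact that a continuous function whose upper right Dini derivative is pointwise bounded above by a continuous function is dominated by the corresponding solution of the ODE. Chaining the two inequalities yields the result.
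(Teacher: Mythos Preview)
Your argument is correct and is in fact the classical proof of this inequality (essentially the one given in Str\"om's 1975 paper cited here and in Dahlquist's earlier work). Note, however, that the paper does not supply its own proof of this theorem: it is listed among the preliminaries as a known property of the logarithmic norm, with the reference to \cite{Strom1975LogarithmicNorm} serving in lieu of a proof. So there is no paper-side argument to compare against; your proposal simply fills in the standard details that the authors chose to omit. The only small point worth making explicit is that both inequalities are understood for $t\ge 0$, which you use tacitly when applying the Gronwall comparison and when passing from $\mu(A)\le\|A\|_M$ to $e^{\mu(A)t}\le e^{\|A\|_M t}$.
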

Let $a_{n}$and $b_{n}$ be sequences of non-negative numbers for
$n=1,\ldots,N$. Define as a linear recursive equation the sequence
$y_{n}$:
\[
y_{n}=a_{n}y_{n-1}+b_{n}
\]
with initial condition $y_{0}=c$ where $c$ is a scalar.
\begin{thm}
If we have that $a_{n}\geq1$ and $b_{n}\geq0$, the sequence $y_{n}$ is
non decreasing with 
\[
y_{N}=\underset{n=1,\ldots,N}{\max}y_{n}
\]
and 
\[
y_{N}=\left[\prod_{k=0}^{N-1}a_{N-k}\right]y_{0}+b_{N}+\sum_{j=1}^{N-1}\left[\prod_{h=1}^{j}a_{N+1-h}\right]b_{N-j}.
\]
\end{thm}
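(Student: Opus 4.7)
The plan is to address both assertions simultaneously by iterating the one-step recursion and supporting the monotonicity claim with a short auxiliary induction. Since $a_n,b_n\ge 0$ and we must interpret ``non-decreasing'' sensibly, I would first note (or assume implicitly, as the statement seems to require) that $y_0=c\ge 0$; otherwise $y_1-y_0=(a_1-1)c+b_1$ need not have a fixed sign.

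For the monotonicity assertion, I would proceed by induction on $n$. The base case $y_1\ge y_0$ follows from $y_1-y_0=(a_1-1)y_0+b_1\ge 0$ once $y_0\ge 0$. For the inductive step, assuming $y_{n-1}\ge y_{n-2}\ge\cdots\ge y_0\ge 0$, the identity
\[
y_n-y_{n-1}=(a_n-1)y_{n-1}+b_n
\]
is a sum of two non-negative terms, so $y_n\ge y_{n-1}\ge 0$. This delivers both $y_N=\max_{n\le N}y_n$ and the preservation of non-negativity needed downstream.

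For the closed form, the natural approach is either backward iteration or induction on $N$. Iterating, one substitutes $y_{N-1}=a_{N-1}y_{N-2}+b_{N-1}$ into $y_N=a_Ny_{N-1}+b_N$, then $y_{N-2}=a_{N-2}y_{N-3}+b_{N-2}$, and so on, collecting at each step an additional product of $a$'s multiplying the previous $b$. After $N-1$ substitutions the $y$-term becomes $(a_N a_{N-1}\cdots a_1)y_0=\bigl[\prod_{k=0}^{N-1}a_{N-k}\bigr]y_0$, and the accumulated contributions of the $b$'s form $b_N+\sum_{j=1}^{N-1}\bigl[\prod_{h=1}^{j}a_{N+1-h}\bigr]b_{N-j}$, precisely as claimed. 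A cleaner write-up is induction on $N$: the base $N=1$ reads $y_1=a_1 y_0+b_1$, matching the formula, and in the inductive step one multiplies the hypothesis for $y_N$ by $a_{N+1}$, adds $b_{N+1}$, and observes that every product $\prod_{h=1}^{j}a_{N+1-h}$ becomes $\prod_{h=1}^{j+1}a_{N+2-h}$ after absorbing the new factor $a_{N+1}$.

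The main obstacle is purely notational: the mixed indexing $N-k$, $N+1-h$, $N-j$ in the displayed formula is easy to mismatch by one, so I would verify explicitly for $N=2$ and $N=3$ (say $y_3=a_3a_2a_1 y_0+a_3a_2 b_1+a_3 b_2+b_3$) before pushing the induction through. No analytic content beyond elementary algebra is needed, and the hypotheses $a_n\ge 1$, $b_n\ge 0$ enter only in the first part of the theorem.
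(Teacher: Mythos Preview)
Your proposal is correct. Note, however, that the paper states this theorem as a preliminary result without supplying a proof; it is a standard elementary fact about linear recursions, so there is no ``paper's own proof'' to compare against. Your approach---induction for both the monotonicity and the closed form, with the unrolling/iteration viewpoint as an alternative for the latter---is exactly the natural one.

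Your remark that the monotonicity claim tacitly requires $y_0\ge 0$ is well taken: the paper does not state this hypothesis, but in both places where the theorem is invoked (the bounds for $\lVert\tilde{Y}_{i,n}-Y_{i,n}\rVert$ and for $\lVert Y_{t_{i,n}}-\tilde{Y}_{i,n}\rVert$ in the proof of the main convergence theorem) the initial value is a norm and in fact equals zero, so the hidden assumption is satisfied. Your small-$N$ sanity checks and the caution about the index bookkeeping are also appropriate; the formula is easy to mis-index, and verifying $N=2,3$ is a sound safeguard.
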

%
%
%
%
%
%
%

\section{Main Result}
\label{sec:2}

We recall the definition of a COGARCH(p,q) process, introduced in \cite{brockwell2006}, based on the following equations:
\begin{align}
\mbox{d}G_{t} & =\sqrt{V_{t}}\mbox{d}L_{t}\nonumber \\
V_{t} & =\alpha_{0}+\mathbf{a^{\top}}Y_{t-}\nonumber \\
\mbox{d}Y_{t} & =BY_{t-}\mbox{d}t+\mbox{e}\left(\alpha_{0}+\mathbf{a^{\top}}Y_{t-}\right)\mbox{d}\left[L,L\right]^{d}\label{eq:COGARCH_p_q}
\end{align}
where $B\in\mathcal{R}^{q\times q}$ is matrix of the form:
\[
B=\left[\begin{array}{ccccc}
0 & 1 & 0 & \ldots & 0\\
0 & 0 & 1 & \ldots & 0\\
\vdots & \vdots & \vdots & \ddots & \vdots\\
0 & 0 & 0 & \ldots & 1\\
-b_{q} & -b_{q-1} & \ldots & \ldots & -b_{1}
\end{array}\right]
\]
and $\mathbf{a}$ and $\mathbf{\mathbf{e}}$ are vectors  defined as:
\begin{align*}
\mathbf{a} & =\left[a_{1},\ldots,a_{p},a_{p+1},\ldots,a_{q}\right]^{\top}\\
\mathbf{e} & =\left[0,\ldots,0,1\right]^{\top}
\end{align*}for $a_{p+1}=\ldots=a_{q}=0$.
As remarked in \cite{brockwell2006} the state process
$Y_{t}$ in a COGARCH(p,q) model is:
\[
Y_{t}=J_{s,t}Y_{s}+K_{s,t}\ s\leq t
\]
 where $J_{s,t}\in\mathcal{R}^{q\times q}$ is a random matrix and $K_{s,t}\in\mathcal{R}^{q\times1}$ is a random vector.
\newline
In particular, if the driven noise is a Compound Poisson the matrices and vectors
in the state process have an analytical form. Let $N$ be the number
of jumps of a Compound Poisson in the interval $\left[0,t\right]$. 
Define $\tau_{N}$ as the time of the last jump in this interval interval
and $Z_{N}:=\Delta L_{\tau_{N}}^{2}=\left(L_{\tau_{N}}-L_{\tau_{N}-}\right)^{2}$
the square of the jump at time $\tau_{N}$. The process $Y_{t}$ can
be rewritten as follows:
\[
Y_{t}=e^{B\left(t-\tau_{N}\right)}Y_{\tau_{N}}\ t\in\left[\tau_{N},\tau_{N+1}\right)
\]
where $Y_{\tau_{N}}$ is the state process at jump time $\tau_{N}$, i.e. the last jump of size less or equal to $t$, defined as:
\begin{equation}
Y_{\tau_{N}}=C_{N}Y_{\tau_{N-1}}+D_{N}\label{eq:StateProcYatTau}
\end{equation}
where the random coefficients $C_{N}$ and $D_{N}$ in \eqref{eq:StateProcYatTau}
are respectively:
\begin{align}
C_{N} & =\left(I+Z_{N}\mathbf{ea}^{\top}\right)e^{B\Delta\tau_{N}}\nonumber \\
D_{N} & =\alpha_{0}Z_{N}\mathbf{e}.\label{eq:TRUE_D_C}
\end{align}
As in \cite{maller2008garch}, we construct a sequence of discrete
processes that converges to the COGARCH(p,q) model in \eqref{eq:COGARCH_p_q}
by means of the Skorokhod distance \citep[see ][ for more details]{Billingsley68book}. For each $n\geq0$
we consider a sequence of natural numbers $N_{n}$ such that $\underset{n\rightarrow+\infty}{\lim}N_{n}=+\infty$
and we obtain a partition of the interval $\left[0,T\right]$ defined
as:
\begin{equation}
0=t_{0,n}\leq t_{1,n}\leq\ldots\leq t_{N_{n},n}=T.\label{eq:Partition}
\end{equation}
The mesh of this partition is:
\[
\Delta t_{n}:=\underset{i=1,\ldots,N_{n}}{\max}\Delta t_{i,n}\ \underset{n\rightarrow+\infty}{\rightarrow}0.
\]
Using the partition in \eqref{eq:Partition}, we introduce the process
$G_{i,n}$ as follows:
\begin{equation}
G_{i,n}=G_{i-1,n}+\sqrt{V_{i-1.n}\Delta t_{i,n}}\epsilon_{i,n}\label{eq:G_i_n}
\end{equation}
where the innovations $\epsilon_{i,n}$ are constructed using the first
jump approximation method developed in \cite{szimayer2007finite} that we review here quickly.
\newline
Let $m_{n}$ be a strict positive sequence of real numbers satisfying the conditions:
\begin{align*}
 & m_{n}\leq1 \ \forall n \geq 0,\\
 & \underset{n\rightarrow+\infty}{\lim}m_{n}=0.
\end{align*}
We require the L\'evy measure $\Pi$ to satisfy following property:
\[
\underset{n\rightarrow+\infty}{\lim}\Delta t_{n}\bar{\Pi}^{2}\left(m_{n}\right)=0
\]
where $\bar{\Pi}\left(x\right):=\int_{\left|y\right|>x}\Pi\left(\mbox{d}x\right)$. 
\newline
We define the stopping time process: 
\begin{equation}
\tau_{i,n}:=\inf\left\{ t\in\left[t_{i-1,n},t_{i,n}\right):\left|\Delta L_{t}\right|>m_{n}\right\} \label{eq:StoppingTimeTaui_n}
\end{equation}
and construct a sequence of independent random variables $\left(\mathbf{1}_{\tau_{i,n}<+\infty}\Delta L_{\tau_{i,n}}\right)_{i=1,\ldots,N_{n}}$ with
density:
\[
f\left(x\right)=\frac{\Pi\left(\mbox{d}x\right)}{\bar{\Pi}\left(m_{n}\right)}\left(1-e^{\Delta t_{i,n}\bar{\Pi}\left(m_{n}\right)}\right).
\]
We introduce the innovations $\epsilon_{i,n}$ defined as:
\begin{equation}
\epsilon_{i,n}=\frac{\mathbf{1}_{\tau_{i,n}<+\infty}\Delta L_{\tau_{i,n}}-v_{i,n}}{\eta_{i,n}}\label{eq:InnovEpsilon_i_n}
\end{equation}
where $v_{i,n}$ and $\eta_{i,n}$ are respectively the mean and the variance of $\epsilon_{i,n}$.
 The variance process $V_{t}$ in \eqref{eq:COGARCH_p_q} is approximated by the process $V_{i,n}$ as:
\begin{equation}
V_{i,n}=\alpha_{0}+\mathbf{a}^{\top}Y_{i,n}\label{eq:V_i_n}
\end{equation}where $Y_{i,n}$ is given by:
\begin{equation}
Y_{i,n}=C_{i,n}Y_{i-1,n}+D_{i,n},\label{eq:IMPORTANTEYI_N}
\end{equation}with coefficients:
\begin{align}
C_{i,n} & =\left(I+\epsilon_{i,n}^{2}\Delta t_{i,n}\mathbf{ea}^{\top}\right)e^{B\Delta t_{i,n}}\nonumber \\
D_{i,n} & =\alpha_{0}\epsilon_{i,n}^{2}\Delta t_{i,n}\mathbf{e}.\label{eq:C_i_nAndD_i_n}
\end{align}
The couple $\left(G_{i,n},V_{i,n}\right)$ converges to the couple $\left(G_{t},V_{t}\right)$
in the Skorokhod distance. The Skorokhod distance between
two processes $U,V$ defined on $D^{d}\left[0,T\right]$, i.e. space
of c\`adl\`ag $\mathcal{R}^{d}$ stochastic processes on $\left[0,T\right]$,
is 
\[
\rho\left(U,V\right):=\underset{\lambda\in\Lambda}{\inf}\left\{ \underset{0\leq t\leq T}{\sup}\left\Vert U_{t}-V_{\lambda\left(t\right)}\right\Vert +\underset{0\leq t\leq T}{\sup}\left|\lambda\left(t\right)-t\right|\right\} 
\]
where $\Lambda$ is a set of increasing continuous functions with $\lambda\left(0\right)=0$
and $\lambda\left(T\right)=T$.\newline 
First of all we need the following auxiliar result.

\begin{thm}\label{thm:AuxResult1} 
Let $N_{n}\left(t\right)$ be a counting process defined as:
\[
N_{n}\left(t\right):=\#\left\{ i\in\mathcal{N}:\ \tau_{i,n}^{\star}\leq t\right\} 
\]
where $t\leq T$, $N_{n}\left(0\right)=0$, $N_{n}\left(T\right)=N_{n}$
and $\tau_{i,n}^{\star}=\min\left\{ \tau_{i,n},t_{i,n}\right\} $
with $\tau_{i,n}$ and $t_{i,n}$ in \eqref{eq:StoppingTimeTaui_n} and \eqref{eq:Partition} respectively. \newline Let $L_{t}$ be a Compound Poisson with finite second moment, the
positive process $H_{n}\left(t\right)$ defined as:
\[
H_{n}\left(t\right):=\prod_{k=1}^{N_{n}\left(t\right)}C_{k,n}^{\star}
\]
where 
\[
C_{k,n}^{\star}:=\left(1+\epsilon_{k,n}^{2}\Delta t_{k,n}\left\Vert \mathbf{ea^{\top}}\right\Vert _{M}\right)e^{\left\Vert B\right\Vert _{M}\Delta t_{i,n}}
\]
 converges uniformly in probability on a compact interval $\left[0,T\right]$
(hereafter ucp) to the positive process $\tilde{H}_{n}\left(t\right)$
as:
\[
\tilde{H}_{n}\left(t\right):=\prod_{k=1}^{N_{n}\left(t\right)}\tilde{C}_{k,n}
\]
with
\[
\tilde{C}_{k,n}:=\left(1+\mathbf{1}_{\tau_{k,n}<+\infty}\Delta L_{\tau_{k,n}}^{2}\left\Vert \mathbf{ea^{\top}}\right\Vert _{M}\right)e^{\left\Vert B\right\Vert _{M}\Delta t_{i,n}}
\]
i.e.
\[
\underset{t\in\left[0,T\right]}{\sup}\left|H_{n}\left(t\right)-\tilde{H}_{n}\left(t\right)\right|\stackrel{p}{\rightarrow}0.
\]
For each fixed $n$, $\tilde{H}_{n}\left(t\right)$ is a non decreasing
striclty positive process in the compact interval $\left[0,T\right]$
such that $\forall t\in\left[0,T\right]$:
\[
\tilde{H}_{n}\left(t\right)\leq\tilde{H}_{n}\left(T\right)\leq e^{\left\Vert B\right\Vert _{M}T+\sum_{0\leq s\leq T}\ln\left(1+\Delta L_{s}^{2}\left\Vert \mathbf{ea^{\top}}\right\Vert _{M}\right)}
\]
 \end{thm}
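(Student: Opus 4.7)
The statement has two essentially independent parts: the structural properties of $\tilde H_n$ (non-decreasing, strictly positive, explicitly bounded) and the ucp convergence $\sup_t|H_n(t)-\tilde H_n(t)|\stackrel{p}{\to}0$. I would handle the structural part first because it is almost algebraic, and then attack the convergence.

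\textbf{Structure of $\tilde H_n$.} Each factor $\tilde C_{k,n}$ is a product of two strictly positive terms that are each at least $1$ (the first because $\Delta L_{\tau_{k,n}}^{2}\|\mathbf{ea^{\top}}\|_M\ge 0$, the second because $\|B\|_M\Delta t_{k,n}\ge 0$), so $\tilde C_{k,n}\ge 1$ and $\tilde H_n(t)$ is a partial product of factors $\ge 1$ indexed by the non-decreasing counter $N_n(t)$; hence $\tilde H_n$ is non-decreasing, strictly positive, and dominated by $\tilde H_n(T)$. Taking logarithms,
\[
\log\tilde H_n(T)=\|B\|_M\sum_{k=1}^{N_n}\Delta t_{k,n}+\sum_{k:\,\tau_{k,n}<+\infty}\log\!\bigl(1+\Delta L_{\tau_{k,n}}^{2}\|\mathbf{ea^{\top}}\|_M\bigr).
\]
The first sum equals $\|B\|_M T$. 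The indices of the second sum pick out pairwise distinct jump times of $L$ on $[0,T]$ with $|\Delta L|>m_n$, and all summands are non-negative, so enlarging the index set to all jump times of $L$ on $[0,T]$ yields an upper bound; exponentiating gives the claimed inequality.

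\textbf{Ucp convergence.} I would factor out the deterministic exponential,
\[
H_n(t)-\tilde H_n(t)=\exp\!\bigl(\|B\|_M S_n(t)\bigr)\bigl[A_n(t)-\tilde A_n(t)\bigr],\qquad S_n(t):=\sum_{k=1}^{N_n(t)}\Delta t_{k,n}\le T,
\]
with $A_n,\tilde A_n$ the products of the $(1+\cdots)$ factors only; since the prefactor is uniformly bounded by $e^{\|B\|_M T}$, it suffices to prove $\sup_{t\le T}|A_n(t)-\tilde A_n(t)|\stackrel{p}{\to}0$. I would then partition the indices $\{1,\ldots,N_n(t)\}$ into $J_n(t):=\{k\le N_n(t):\tau_{k,n}<+\infty\}$ (the intervals containing a jump of $L$ of size $>m_n$, a.s.\ of cardinality at most $N_L(T)<+\infty$ for the compound Poisson driver) and its complement, and treat the two regimes separately. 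On $J_n(t)^c$ the $\tilde A_n$-factors are identically $1$ while the $A_n$-factors equal $1+v_{k,n}^{2}\Delta t_{k,n}\|\mathbf{ea^{\top}}\|_M/\eta_{k,n}^{2}$; the moment estimates of the first-jump construction from \cite{szimayer2007finite}, combined with $\Delta t_n\bar\Pi^{2}(m_n)\to 0$, should force the log-sum of these factors over $J_n(t)^c$ to be $O(\Delta t_n)$ uniformly in $t$ in probability, so the no-jump partial product tends to $1$. On $J_n(t)$ only finitely many factors survive, and matching $\epsilon_{k,n}^{2}\Delta t_{k,n}$ to $\Delta L_{\tau_{k,n}}^{2}$ term by term (again using the moment estimates) yields factor-wise convergence. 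Finally I would assemble the sup via the telescoping identity $\prod a_k-\prod b_k=\sum_j(\prod_{k<j}a_k)(a_j-b_j)(\prod_{k>j}b_k)$ together with the uniform domination of both products by the explicit upper bound established in the structural part. The hard step is the quantitative control of $\epsilon_{k,n}^{2}\Delta t_{k,n}-\mathbf{1}_{\tau_{k,n}<+\infty}\Delta L_{\tau_{k,n}}^{2}$ on jump intervals, which unpacks the definitions of $v_{k,n},\eta_{k,n}$ and leans on the moment hypotheses on $\Pi$; everything else reduces to elementary product/log manipulation and counting of compound-Poisson jumps.
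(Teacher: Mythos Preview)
Your structural part is exactly the paper's argument, and for the ucp convergence you and the paper both begin by factoring out the common exponential, reducing to $\sup_t|A_n(t)-\tilde A_n(t)|\stackrel{p}{\to}0$ with $A_n,\tilde A_n$ the pure $(1+\cdots)$-products. From here, however, the paper takes a shorter route than your jump/no-jump partition plus telescoping: it passes to logarithms and uses that $x\mapsto\ln(1+x)$ is $1$-Lipschitz on $[0,\infty)$ to get
\[
\sup_{t\le T}\Bigl|\log A_n(t)-\log\tilde A_n(t)\Bigr|\le\|\mathbf{ea^\top}\|_M\sup_{t\le T}\sum_{k=1}^{N_n(t)}\bigl|\epsilon_{k,n}^{2}\Delta t_{k,n}-\mathbf{1}_{\tau_{k,n}<+\infty}\Delta L_{\tau_{k,n}}^{2}\bigr|,
\]
and then simply cites \cite{maller2008garch} for the fact that this last supremum tends to $0$ in probability, concluding via the boundedness of $\tilde A_n$ already established. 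So the ``hard step'' you isolate---controlling $\epsilon_{k,n}^{2}\Delta t_{k,n}-\mathbf{1}_{\tau_{k,n}<+\infty}\Delta L_{\tau_{k,n}}^{2}$ summed over all intervals---is not re-derived in the paper but imported wholesale from that reference; your proposal to unpack $v_{k,n},\eta_{k,n}$ separately on jump and no-jump intervals is effectively re-proving part of that cited result. Both arguments are correct; the paper's log trick buys brevity and a clean black-box citation, while your telescoping approach is more self-contained and makes the role of the finite compound-Poisson jump count explicit.
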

\begin{proof}
We start from
\begin{align*}
\underset{t\in\left[0,T\right]}{\sup}\left|H_{n}\left(t\right)-\tilde{H}_{n}\left(t\right)\right| & =\underset{t\in\left[0,T\right]}{\sup}\left|\prod_{k=1}^{N_{n}\left(t\right)}C_{k,n}^{\star}-\prod_{k=1}^{N_{n}\left(t\right)}\tilde{C}_{k,n}\right|\\
 & \leq e^{\left\Vert B\right\Vert _{M}T}\underset{t\in\left[0,T\right]}{\sup}\left|\prod_{k=1}^{N_{n}\left(t\right)}\left(1+\epsilon_{k,n}^{2}\Delta t_{k,n}\left\Vert \mathbf{ea^{\top}}\right\Vert _{M}\right)-\prod_{k=1}^{N_{n}\left(t\right)}\left(1+\mathbf{1}_{\tau_{k,n}<+\infty}\Delta L_{\tau_{k,n}}^{2}\left\Vert \mathbf{ea^{\top}}\right\Vert _{M}\right)\right|\\
 & =e^{\left\Vert B\right\Vert _{M}T}\underset{t\in\left[0,T\right]}{\sup}\left|e^{\sum_{k=1}^{N_{n}\left(t\right)}\ln\left(1+\epsilon_{k,n}^{2}\Delta t_{k,n}\left\Vert \mathbf{ea^{\top}}\right\Vert _{M}\right)}-e^{\sum_{k=1}^{N_{n}\left(t\right)}\left(1+\mathbf{1}_{\tau_{k,n}<+\infty}\Delta L_{\tau_{k,n}}^{2}\left\Vert \mathbf{ea^{\top}}\right\Vert _{M}\right)}\right|.
\end{align*}

Observe that 
\begin{align*}
L_{n} & :=\underset{t\in\left[0,T\right]}{\sup}\left|\sum_{k=1}^{N_{n}\left(t\right)}\ln\left(1+\epsilon_{k,n}^{2}\Delta t_{k,n}\left\Vert \mathbf{ea^{\top}}\right\Vert _{M}\right)-\sum_{k=1}^{N_{n}\left(t\right)}\left(1+\mathbf{1}_{\tau_{k,n}<+\infty}\Delta L_{\tau_{k,n}}^{2}\left\Vert \mathbf{ea^{\top}}\right\Vert _{M}\right)\right|\\
 & \leq\underset{t\in\left[0,T\right]}{\sup}\left|\sum_{k=1}^{N_{n}\left(t\right)}\left(\epsilon_{k,n}^{2}\Delta t_{k,n}\left\Vert \mathbf{ea^{\top}}\right\Vert _{M}-\mathbf{1}_{\tau_{k,n}<+\infty}\Delta L_{\tau_{k,n}}^{2}\left\Vert \mathbf{ea^{\top}}\right\Vert _{M}\right)\right|\\
 & \leq\left\Vert \mathbf{ea^{\top}}\right\Vert _{M}\underset{t\in\left[0,T\right]}{\sup}\sum_{k=1}^{N_{n}\left(t\right)}\left|\left(\epsilon_{k,n}^{2}\Delta t_{k,n}-\mathbf{1}_{\tau_{k,n}<+\infty}\Delta L_{\tau_{k,n}}^{2}\right)\right|.
\end{align*}
As shown in \cite{maller2008garch}, we have that
\[
\underset{t\in\left[0,T\right]}{\sup}\sum_{k=1}^{N_{n}\left(t\right)}\left|\left(\epsilon_{k,n}^{2}\Delta t_{k,n}-\mathbf{1}_{\tau_{k,n}<+\infty}\Delta L_{\tau_{k,n}}^{2}\right)\right|\stackrel{p}{\rightarrow}0,
\]
that implies
\begin{equation}
\underset{t\in\left[0,T\right]}{\sup}\left|\sum_{k=1}^{N_{n}\left(t\right)}\ln\left(1+\epsilon_{k,n}^{2}\Delta t_{k,n}\left\Vert \mathbf{ea^{\top}}\right\Vert _{M}\right)-\sum_{k=1}^{N_{n}\left(t\right)}\left(1+\mathbf{1}_{\tau_{k,n}<+\infty}\Delta L_{\tau_{k,n}}^{2}\left\Vert \mathbf{ea^{\top}}\right\Vert _{M}\right)\right|\stackrel{p}{\rightarrow0}.
\label{llll}
\end{equation}
Using result in \eqref{llll}, we have
\[
\underset{t\in\left[0,T\right]}{\sup}\left|e^{\sum_{k=1}^{N_{n}\left(t\right)}\ln\left(1+\epsilon_{k,n}^{2}\Delta t_{k,n}\left\Vert \mathbf{ea^{\top}}\right\Vert _{M}\right)}-e^{\sum_{k=1}^{N_{n}\left(t\right)}\left(1+\mathbf{1}_{\tau_{k,n}<+\infty}\Delta L_{\tau_{k,n}}^{2}\left\Vert \mathbf{ea^{\top}}\right\Vert _{M}\right)}\right|\stackrel{p}{\rightarrow0}.
\]

$\tilde{H}_{n}\left(t\right)$ is a non decreasing strictly
positive process since is a product of terms $\tilde{C}_{k,n}\geq1$
a.s. and if $s>t$ then $\tilde{H}_{n}\left(s\right)$ has at least
the same terms as in $\tilde{H}_{n}\left(t\right)$
. Moreover 
\[
\tilde{H}_{n}\left(T\right)=e^{\left\Vert B\right\Vert _{M}T+\sum_{k=1}^{N_{n}}\ln\left(1+\mathbf{1}_{\tau_{k,n}<+\infty}\Delta L_{\tau_{k,n}}^{2}\left\Vert \mathbf{ea^{\top}}\right\Vert _{M}\right)}\leq e^{\left\Vert B\right\Vert _{M}T+\sum_{0\leq s\leq T}\ln\left(1+\Delta L_{s}^{2}\left\Vert \mathbf{ea^{\top}}\right\Vert _{M}\right)}
\]
since $\Delta L_{s}^{2}=\Delta L_{s}^{2}\mathbf{1}_{\left|\Delta L_{s}\right|\geq m_{n}}+\Delta L_{s}^{2}\mathbf{1}_{\left|\Delta L_{s}\right|<m_{n}}$.
\end{proof}
\begin{remark}
We observe, from Theorem \ref{thm:AuxResult1}, that 
\begin{equation}
H_{n}\left(t\right)\stackrel{ucp}{\rightarrow}\tilde{H}_{n}\left(t\right)\leq e^{\left\Vert B\right\Vert _{M}T+\sum_{0\leq s\leq T}\ln\left(1+\Delta L_{s}^{2}\left\Vert \mathbf{ea^{\top}}\right\Vert _{M}\right)}
\label{uffa}
\end{equation}
on an interval $\left[0,T\right]$. Moreover, the term on the right hand
side of the inequality in \eqref{uffa} is bounded almost surely on the compact interval $\left[0, T\right]$
since $L_{t}$ is a Compound Poisson process. \end{remark}
The following theorem is established for the Compound Poisson driven noise case.
\begin{thm}
\label{thm:Principal1} Let $L_{t}$ be a Compound Poisson process
with $E\left(L_{1}^{2}\right)<+\infty$. The Skorokhod distance computed on the
processes $\left(G_{t},V_{t}\right)_{t\geq0}$ and their discretized
version $\left(G_{i,n},V_{i,n}\right)_{i=1,\ldots,N_{n}}$ converges
in probability to zero, i.e.:
\[
\rho\left(\left(G_{i,n},V_{i,n}\right)_{i=1,\ldots,N_{n}},\left(G_{t},V_{t}\right)_{t\geq0}\right)\stackrel{P}{\rightarrow}0\ as\ n\rightarrow+\infty.
\]

\end{thm}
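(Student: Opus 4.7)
Since $L$ is a compound Poisson process with $E(L_1^2)<+\infty$, it almost surely produces only finitely many jumps $\tau_1<\ldots<\tau_{N^{\star}}\le T$ on the compact interval. On the event (of probability tending to one as $n\to+\infty$) that each $\tau_k$ satisfies $|\Delta L_{\tau_k}|>m_n$ and lies alone in its mesh cell $[t_{i_k-1,n},t_{i_k,n}]$, we have $\tau_{i_k,n}=\tau_k$ for those indices $i_k$ carrying a true jump and $\tau_{i,n}=+\infty$ otherwise. I would construct a piecewise-linear, strictly increasing $\lambda_n\in\Lambda$ anchored so that $\lambda_n(t_{i_k,n})=\tau_k$ together with $\lambda_n(0)=0$, $\lambda_n(T)=T$; this gives the deterministic bound $\sup_{t}|\lambda_n(t)-t|\le\Delta t_n\to 0$, controlling the second term in the Skorokhod distance.

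The next step is to control $|V_{i(t),n}-V_{\lambda_n(t)}|$ uniformly, where $i(t)$ is the index with $t\in[t_{i-1,n},t_{i,n})$. Iterating \eqref{eq:IMPORTANTEYI_N} yields a matrix-product representation of $Y_{i,n}$ in terms of the $C_{k,n}$ and $D_{k,n}$, and the analogous representation holds for $Y_t$ via the exact coefficients in \eqref{eq:TRUE_D_C}. On the good event, cells without jumps reduce $C_{i,n}$ to $e^{B\Delta t_{i,n}}$ and $D_{i,n}$ to $0$; after cancellation the two representations differ only through the per-jump discrepancies $\epsilon_{i_k,n}^{2}\Delta t_{i_k,n}-\mathbf{1}_{\tau_{i_k,n}<+\infty}\Delta L_{\tau_{i_k,n}}^{2}$ and through $\Delta t_{i_k,n}$ versus $\tau_k-\tau_{k-1}$ in the matrix exponentials. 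Using submultiplicativity of $\|\cdot\|_M$, the matrix products are uniformly dominated by $H_n(t)$, which by Theorem~\ref{thm:AuxResult1} converges ucp to the a.s.\ bounded $\tilde H_n(t)$. Combined with $\sum_k|\epsilon_{k,n}^{2}\Delta t_{k,n}-\mathbf{1}_{\tau_{k,n}<+\infty}\Delta L_{\tau_{k,n}}^{2}|\stackrel{p}{\to}0$ (already exploited in Theorem~\ref{thm:AuxResult1}) and with $\|e^{B\Delta t}-I\|_M\to 0$ from \eqref{eq:InqInducedVectorNorm}, this yields $\sup_{t}|V_{i(t),n}-V_{\lambda_n(t)}|\stackrel{p}{\to}0$.

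For the $G$ component, I would write $G_{i,n}=G_{0}+\sum_{k=1}^{i}\sqrt{V_{k-1,n}\Delta t_{k,n}}\,\epsilon_{k,n}$ and match it against $G_{\lambda_n(t)}=G_0+\sum_{\tau_k\le\lambda_n(t)}\sqrt{V_{\tau_k-}}\,\Delta L_{\tau_k}$. On jump cells, $\sqrt{\Delta t_{k,n}}\,\epsilon_{k,n}$ reduces to $\mathbf{1}_{\tau_{k,n}<+\infty}\Delta L_{\tau_{k,n}}=\Delta L_{\tau_k}$ up to a $(v_{i,n},\eta_{i,n})$-calibration remainder that vanishes as $\Delta t_n\to 0$; on no-jump cells the analogous sums on both sides are negligible because the partial sum of squares tends to zero and the variance $V_{k-1,n}$ stays bounded. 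Coupling this with the $V$-estimate above and using uniform boundedness of $V$ on $[0,T]$ (a.s., from compound Poisson finiteness of jumps), one obtains $\sup_{t}|G_{i(t),n}-G_{\lambda_n(t)}|\stackrel{p}{\to}0$, concluding the proof.

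The main obstacle is the uniform-in-index control of the matrix products as $N_n\to+\infty$: without the a.s.\ bound on $\tilde H_n(T)$ furnished by Theorem~\ref{thm:AuxResult1}, the per-cell approximation errors cannot be aggregated into a probability-uniform bound. A subsidiary technicality is to verify that the $(v_{i,n},\eta_{i,n})$ normalisation in \eqref{eq:InnovEpsilon_i_n} does not generate an accumulating drift of order $N_n\sqrt{\Delta t_n}$ in $G_{i,n}$, so that the martingale-like cancellations over no-jump cells actually take place at the required uniform rate.
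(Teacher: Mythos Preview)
Your plan is essentially sound and rests on the same two pillars as the paper's argument: the uniform-in-$t$ control of the matrix products furnished by Theorem~\ref{thm:AuxResult1}, and the result from \cite{maller2008garch} that $\sum_{k}\bigl|\epsilon_{k,n}^{2}\Delta t_{k,n}-\mathbf{1}_{\tau_{k,n}<+\infty}\Delta L_{\tau_{k,n}}^{2}\bigr|\stackrel{p}{\rightarrow}0$. The organisation, however, differs. You compare $Y_{i,n}$ with $Y_t$ in one shot, constructing the time change $\lambda_n$ explicitly and working on the good event; the paper instead interposes two auxiliary processes, $\bar Y_{t,n}:=e^{B(t-\Gamma_{t,n})}Y_{i,n}$ and $\tilde Y_{t,n}$ built from the recursion \eqref{eq:Y_tilde_i_n}--\eqref{eq:Ctilde_Dtilde} with the raw jump sizes $\mathbf{1}_{\tau_{i,n}<+\infty}\Delta L_{\tau_{i,n}}^{2}$ in place of $\epsilon_{i,n}^{2}\Delta t_{i,n}$, and establishes the chain $Y_{t,n}\to\bar Y_{t,n}\to\tilde Y_{t,n}\to Y_t$ in ucp. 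Each link is bounded by a scalar linear recursion $y_i\le a_i y_{i-1}+b_i$ with $a_i\ge 1$, whose closed form (the preliminary result on such recursions) together with Theorem~\ref{thm:AuxResult1} delivers the supremum estimate. The payoff of this decomposition is that what you flag as the ``subsidiary technicality'' disappears: your claim that on no-jump cells $C_{i,n}$ reduces to $e^{B\Delta t_{i,n}}$ is not literally true (there $\epsilon_{i,n}=-v_{i,n}/\eta_{i,n}\neq 0$), but it \emph{is} exactly true for $\tilde C_{i,n}$, so the step $Y_{i,n}\to\tilde Y_{i,n}$ absorbs the normalisation remainder in one stroke, and the step $\tilde Y_{i,n}\to Y_{t_{i,n}}$ can then match the exact COGARCH dynamics cell by cell (using that for $n$ large every jump time lies on the grid). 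The $G$-component and the final passage to the Skorokhod distance are referred back to \cite{maller2008garch} in the paper, much as you sketch.
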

\begin{proof}
The proof follows the same steps as in \cite{maller2008garch}
\begin{itemize}
\item Approximation procedure for the underlying process.
\item Approximation procedure for the variance process.
\item Approximation procedure for the COGARCH(p,q) model.
\item Convergence of the pair in the Skorokhod distance.
\end{itemize}
Steps 1, 2, 4 are exactly the same  as in \cite{maller2008garch}. To prove that
the discrete variance process $V_{i,n}$ converges ucp
on a compact time interval to the continuous-time process $V_{t}$ we
 first need to show that $Y_{i,n}\stackrel{ucp}{\rightarrow}Y_{t}$. This
result is achieved through intermediate steps illustrated below. \newline We introduce the
counting process $N_{n}\left(t\right)$ defined as:
\begin{equation}
N_{n}\left(t\right):=\#\left\{ i\in\mathcal{N}:\tau_{i,n}^{\star}\leq t\right\} 
\label{eq:CountProcess}
\end{equation}
with $t\leq T$, $N_{n}\left(0\right)=0$ and $\tau_{i,n}^{\star}=\min\left\{ \tau_{i,n},t_{i,n}\right\} .$
\newline $N_{n}\left(t\right)$ increases by 1 in each subinterval $\left(t_{i-1,n},t_{i,n}\right]$, $i = 1, 2, \ldots, n$, at 
the first time the jump is of magnitude greater or equal to $m_{n}$ or
at $t_{i,n}$ if that jump does not occur. 
\newline Using the process $N_{n}\left(t\right)$ in \eqref{eq:CountProcess}
we construct the time process $\Gamma_{t,n}$ as:
\begin{equation}
\Gamma_{t,n}=\sum_{i=1}^{N_{n}\left(t\right)}\Delta t_{i,n}.\label{eq:Gamma_t_n}
\end{equation}
Now we want to show that the piecewise constant process $Y_{t,n}:=Y_{i,n}$ with
$t\in\left[t_{i,n},t_{i+1,n}\right)$ converges in ucp to the process
$\bar{Y}_{t,n}:=e^{B\left(t-\Gamma_{t,n}\right)}Y_{i,n}$ i.e.:
\begin{align*}
\underset{0\leq t\leq T}{\sup} & \left\Vert Y_{t,n}-\bar{Y}_{t,n}\right\Vert \stackrel{P}{\rightarrow}0.
\end{align*}
For each $t\in\left[0,T\right],$ we have:
\footnotesize
\begin{align*}
\left\Vert Y_{t,n}-\bar{Y}_{t,n}\right\Vert  & =\left\Vert e^{B\left(t-\Gamma_{t,n}\right)}Y_{i,n}-Y_{i,n}\right\Vert \\
 & \leq\left\Vert e^{B\left(t-\Gamma_{t,n}\right)}-I\right\Vert _{M}\left\Vert Y_{i,n}\right\Vert \\
 & =\left\Vert e^{B\left(t-\Gamma_{t,n}\right)}-I-B\left(t-\Gamma_{t,n}\right)+B\left(t-\Gamma_{t,n}\right)\right\Vert _{M}\left\Vert Y_{i,n}\right\Vert \\
 & \leq\left(\left\Vert e^{B\left(t-\Gamma_{t,n}\right)}-I-B\left(t-\Gamma_{t,n}\right)\right\Vert _{M}+\left\Vert B\left(t-\Gamma_{t,n}\right)\right\Vert \right)\left\Vert Y_{i,n}\right\Vert 
\end{align*}
\normalsize
using the inequality in \eqref{eq:InqInducedVectorNorm}, we get:
\begin{align}
\left\Vert Y_{t,n}-\bar{Y}_{t,n}\right\Vert  & \leq\left(e^{\left\Vert B\left(t-\Gamma_{t,n}\right)\right\Vert _{M}}-1\right)\left\Vert Y_{i,n}\right\Vert \nonumber \\
 & \leq\left(e^{\left\Vert B\right\Vert _{M}\Delta t_{n}}-1\right)\left\Vert Y_{i,n}\right\Vert \label{eq:IneqA}
\end{align}
Since by construction $Y_{t,n}=Y_{i,n}$ with $t \in \left[t_{i,n}, t_{i+1,n}\right)$ and $Y_{t,n}$ has c\`adl\`ag paths,
it follows that $\underset{t\in\left[0,T\right]}{\sup}\left\Vert Y_{t,n}\right\Vert $
is almost surely finite and
\begin{align*}
\underset{t\in\left[0,T\right]}{\sup}\left\Vert Y_{t,n}-\bar{Y}_{t,n}\right\Vert  & \leq\left(e^{\left\Vert B\right\Vert _{M}\Delta t_{n}}-1\right)\underset{t\in\left[0,T\right]}{\sup}\left\Vert Y_{t,n}\right\Vert \stackrel{P}{\rightarrow}0
\end{align*}
as $n\rightarrow+\infty$. 
\newline
The next step is to show the convergence ucp of $\bar{Y}_{t,n}$
to $\tilde{Y}_{t,n}$ where the last process is defined as:
\begin{equation}
\tilde{Y}_{t,n}=e^{B\left(t-\Gamma_{t,n}\right)}\tilde{Y}_{i,n}\label{eq:Y_tilde_t_n}
\end{equation}
with:
\begin{equation}
\tilde{Y}_{i,n}=\tilde{C}_{i,n}\tilde{Y}_{i-1,n}+\tilde{D}_{i,n}\label{eq:Y_tilde_i_n}
\end{equation}
where the random matrix $\tilde{C}_{i,n}$ and the random vector $\tilde{D}_{i,n}$
are respectively:
\begin{align}
\tilde{C}_{i,n} & =\left(I+\left(\mathbf{1}_{\tau_{i,n}<+\infty}\Delta L_{\tau_{i,n}}\right)^{2}\mathbf{ea}^{\top}\right)e^{B\Delta t_{i,n}}\nonumber \\
\tilde{D}_{i,n} & =\alpha_{0}\left(\mathbf{1}_{\tau_{i,n}<+\infty}\Delta L_{\tau_{i,n}}\right)^{2}\mathbf{e}.\label{eq:Ctilde_Dtilde}
\end{align}
We consider
\begin{equation}
\underset{t\in\left[0,T\right]}{\sup}\left\Vert \tilde{Y}_{t,n}-\bar{Y}_{t,n}\right\Vert \leq e^{\left\Vert B\right\Vert _{M}\Delta t_{n}}\underset{i=1,\ldots,N_{n}}{\sup}\left\Vert \tilde{Y}_{i,n}-Y_{i,n}\right\Vert \label{eq:SecondSup}
\end{equation}
and observe that, for $i=1,\ldots,N_{n}$, we have:
\begin{align}
\left\Vert \tilde{Y}_{i,n}-Y_{i,n}\right\Vert  & \leq\left\Vert \tilde{C}_{i,n}\tilde{Y}_{i-1,n}-C_{i,n}Y_{i-1,n}\right\Vert +\left\Vert \tilde{D}_{i,n}-D_{i,n}\right\Vert. \label{eq:absdiffY_i_n_tilde_Y_i_n}
\end{align}
We analyze the second term in \eqref{eq:absdiffY_i_n_tilde_Y_i_n} and get:
\footnotesize
\begin{align}
\left\Vert \tilde{D}_{i,n}-D_{i,n}\right\Vert  & =\left\Vert \alpha_{0}\left(\mathbf{1}_{\tau_{i,n}<+\infty}\Delta L_{\tau_{i,n}}\right)^{2}\mathbf{e}-\alpha_{0}\epsilon_{i,n}^{2}\Delta t_{i,n}\mathbf{e}\right\Vert \nonumber \\
 & \leq\left|\alpha_{0}\right|\left|\left(\mathbf{1}_{\tau_{i,n}<+\infty}\Delta L_{\tau_{i,n}}\right)^{2}-\epsilon_{i,n}^{2}\Delta t_{i,n}\right|.\label{eq:diffD_i_n_tildeD_i_n}
\end{align}
\normalsize
The first term in \eqref{eq:absdiffY_i_n_tilde_Y_i_n} can be bounded
by adding and subtracting the quantity $C_{i,n}\tilde{Y}_{i-1,n}$:
\footnotesize
\begin{align}
\left\Vert \tilde{C}_{i,n}\tilde{Y}_{i-1,n}-C_{i,n}Y_{i-1,n}\right\Vert  & =\left\Vert \tilde{C}_{i,n}\tilde{Y}_{i-1,n}-C_{i,n}\tilde{Y}_{i-1,n}+C_{i,n}\tilde{Y}_{i-1,n}-C_{i,n}Y_{i-1,n}\right\Vert \nonumber \\
 & \leq\left\Vert \tilde{C}_{i,n}-C_{i,n}\right\Vert _{M}\left\Vert \tilde{Y}_{i-1,n}\right\Vert +\left\Vert C_{i,n}\right\Vert _{M}\left\Vert \tilde{Y}_{i-1,n}-Y_{i-1,n}\right\Vert \nonumber \\
 & \leq\left\Vert \left[\left(\mathbf{1}_{\tau_{i,n}<+\infty}\Delta L_{\tau_{i,n}}\right)^{2}-\epsilon_{i,n}^{2}\Delta t_{i,n}\right]\mathbf{ea}^{\top}e^{B\Delta t_{i,n}}\right\Vert _{M}\left\Vert \tilde{Y}_{i-1,n}\right\Vert \nonumber \\
 & +\left\Vert C_{i,n}\right\Vert _{M}\left\Vert \tilde{Y}_{i-1,n}-Y_{i-1,n}\right\Vert \nonumber \\
 & \leq\left|\left(\mathbf{1}_{\tau_{i,n}<+\infty}\Delta L_{\tau_{i,n}}\right)^{2}-\epsilon_{i,n}^{2}\Delta t_{i,n}\right|\left\Vert \mathbf{ea}^{\top}\right\Vert _{M}e^{\left\Vert B\right\Vert _{M}\Delta t_{i,n}}\left\Vert \tilde{Y}_{i-1,n}\right\Vert \nonumber \\
 & +\left\Vert C_{i,n}\right\Vert _{M}\left\Vert \tilde{Y}_{i-1,n}-Y_{i-1,n}\right\Vert \label{eq:DiffCtildeC}
\end{align}
\normalsize
Substituting \eqref{eq:DiffCtildeC} and \eqref{eq:diffD_i_n_tildeD_i_n}
into \eqref{eq:absdiffY_i_n_tilde_Y_i_n} we have:
\footnotesize
\begin{align}
\left\Vert \tilde{Y}_{i,n}-Y_{i,n}\right\Vert  & \leq\left\Vert C_{i,n}\right\Vert _{M}\left\Vert \tilde{Y}_{i-1,n}-Y_{i-1,n}\right\Vert \nonumber \\
 & +\left|\left(\mathbf{1}_{\tau_{i,n}<+\infty}\Delta L_{\tau_{i,n}}\right)^{2}-\epsilon_{i,n}^{2}\Delta t_{i,n}\right|\left(\left|\alpha_{0}\right|+\left\Vert \mathbf{ea}^{\top}\right\Vert _{M}e^{\left\Vert B\right\Vert _{M}\Delta t_{i,n}}\left\Vert \tilde{Y}_{i-1,n}\right\Vert \right)\label{eq:DiffYtildeY2}
\end{align}
\normalsize
Since a.s.:
\begin{equation}
\left\Vert C_{i,n}\right\Vert _{M}\leq\left(1+\epsilon_{i,n}^{2}\Delta t_{i,n}\left\Vert \mathbf{ea}^{\top}\right\Vert _{M}\right)e^{\left\Vert B\right\Vert _{M}\Delta t_{i,n}}:=C_{i,n}^{\star}\geq1\label{eq:CoeffGreaterThan1}
\end{equation}
and defining 
\begin{equation}
K_{i-1,n}:=\left|\alpha_{0}\right|+\left\Vert \mathbf{ea}^{\top}\right\Vert _{M}e^{\left\Vert B\right\Vert _{M}\Delta t_{i,n}}\left\Vert \tilde{Y}_{i-1,n}\right\Vert \label{eq:K_i_n}
\end{equation}
 we have:
\begin{align}
\left\Vert \tilde{Y}_{i,n}-Y_{i,n}\right\Vert  & \leq C_{i,n}^{\star}\left\Vert \tilde{Y}_{i-1,n}-Y_{i-1,n}\right\Vert \nonumber \\
 & +\left|\left(\mathbf{1}_{\tau_{i,n}<+\infty}\Delta L_{\tau_{i,n}}\right)^{2}-\epsilon_{i,n}^{2}\Delta t_{i,n}\right|K_{i-1,n}.\label{eq:DiffYtildeY3}
\end{align}
The right hand side in \eqref{eq:DiffYtildeY3} is a linear recursive
equation with random coefficients and condition \eqref{eq:CoeffGreaterThan1}
implies that:
\footnotesize
\begin{align}
\underset{i=1,\ldots,N_{n}}{\sup}\left\Vert \tilde{Y}_{i,n}-Y_{i,n}\right\Vert  & \leq\left[\prod_{i=0}^{N_{n}-1}C_{N_{n}-i,n}^{\star}\right]\left\Vert \tilde{Y}_{0,n}-Y_{0,n}\right\Vert +\left|\left(\mathbf{1}_{\tau_{N_{n},n}<+\infty}\Delta L_{\tau_{N_{n},n}}\right)^{2}-\epsilon_{N_{n},n}^{2}\Delta t_{N_{n},n}\right|K_{N_{n}-1,n}\nonumber \\
 & +\sum_{i=1}^{N_{n}-1}\left[\prod_{h=1}^{i}C_{N_{n}+1-h,n}^{\star}\right]\left|\left(\mathbf{1}_{\tau_{N_{n}-i,n}<+\infty}\Delta L_{\tau_{N_{n}-i,n}}\right)^{2}-\epsilon_{N_{n}-i,n}^{2}\Delta t_{N_{n}-i,n}\right|K_{N_{n}-1-i,n}.\label{eq:CrucialPoint1}
\end{align}
\normalsize
The term:
\[
\left[\prod_{i=0}^{N_{n}-1}C_{N_{n}-i,n}^{\star}\right]\left\Vert \tilde{Y}_{0,n}-Y_{0,n}\right\Vert \geq0\ n\geq 1
\]
with
\[
E\left[\left(\prod_{i=0}^{N_{n}-1}C_{N_{n}-i,n}^{\star}\right)\left\Vert \tilde{Y}_{0,n}-Y_{0,n}\right\Vert \right]=E\left[\left(\prod_{i=0}^{N_{n}-1}C_{N_{n}-i,n}^{\star}\right)\right]\left\Vert \tilde{Y}_{0,n}-Y_{0,n}\right\Vert 
\]
since $\tilde{Y}_{0,n}=Y_{0,n}$ we have:
\begin{equation}
E\left[\left(\prod_{i=0}^{N_{n}-1}C_{N_{n}-i,n}^{\star}\right)\right]\left\Vert \tilde{Y}_{0,n}-Y_{0,n}\right\Vert =0\Rightarrow\left(\prod_{i=0}^{N_{n}-1}C_{N_{n}-i,n}^{\star}\right)\left\Vert \tilde{Y}_{0,n}-Y_{0,n}\right\Vert =0\ \text{a.s.}\label{eq:PROVA}
\end{equation}
Condition \eqref{eq:CrucialPoint1} becomes:
\footnotesize
\begin{align}
\underset{i=1,\ldots,N_{n}}{\sup}\left\Vert \tilde{Y}_{i,n}-Y_{i,n}\right\Vert  & \leq\left|\left(\mathbf{1}_{\tau_{N_{n},n}<+\infty}\Delta L_{\tau_{N_{n},n}}\right)^{2}-\epsilon_{N_{n},n}^{2}\Delta t_{N_{n},n}\right|K_{N_{n}-1,n}\nonumber \\
 & +\sum_{i=1}^{N_{n}-1}\left[\prod_{h=1}^{i}C_{N_{n}+1-h,n}^{\star}\right]\left|\left(\mathbf{1}_{\tau_{N_{n}-i,n}<+\infty}\Delta L_{\tau_{N_{n}-i,n}}\right)^{2}-\epsilon_{N_{n}-i,n}^{2}\Delta t_{N_{n}-i,n}\right|K_{N_{n}-1-i,n}.\label{eq:CrucialPoint2}
\end{align}
\normalsize
Defining: 
\footnotesize
\begin{align*}
Q_{n} & :=\left|\left(\mathbf{1}_{\tau_{N_{n},n}<+\infty}\Delta L_{\tau_{N_{n},n}}\right)^{2}-\epsilon_{N_{n},n}^{2}\Delta t_{N_{n},n}\right|K_{N_{n}-1,n}\\
 & +\sum_{i=1}^{N_{n}-1}\left[\prod_{h=1}^{i}C_{N_{n}+1-h,n}^{\star}\right]\left|\left(\mathbf{1}_{\tau_{N_{n}-i,n}<+\infty}\Delta L_{\tau_{N_{n}-i,n}}\right)^{2}-\epsilon_{N_{n}-i,n}^{2}\Delta t_{N_{n}-i,n}\right|K_{N_{n}-1-i,n}.
\end{align*}
\normalsize
we observe that $Q_{n}$ can be bounded. Indeed, $\forall i=1,\ldots,N_{n}$:
\[
\prod_{h=1}^{i}C_{N_{n}+1-h,n}^{\star}\leq\prod_{h=1}^{N_{n}}C_{N_{n}+1-h,n}^{\star}
\]
and, from Theorem \ref{thm:AuxResult1}, the quantity $\prod_{h=1}^{N_{n}}C_{N_{n}+1-h,n}^{\star}$ converges in probability to a non negative r.v. that is a.s. bounded by: 
\[
e^{\left\Vert B\right\Vert _{M}T+\sum_{0\leq s\leq T}\ln\left(1+\Delta L_{s}^{2}\left\Vert \mathbf{ea^{\top}}\right\Vert _{M}\right)}.
\]  
Even $\underset{i=1,\ldots,N_{n}}{\sup}K_{i,n}$ is bounded a.s. $\forall n$. Consequently we have:
\footnotesize
\begin{equation}
Q_{n} \leq \left[\prod_{h=1}^{N_{n}}C_{N_{n}+1-h,n}^{\star}\right]\left[\underset{i=1,\ldots,N_{n}}{\sup}K_{i,n}\right]\sum_{i=1}^{N_{n}}\left|\left(\mathbf{1}_{\tau_{i,n}<+\infty}\Delta L_{\tau_{i,n}}\right)^{2}-\epsilon_{i,n}^{2}\Delta t_{i,n}\right|\label{eq:Prova2}
\end{equation}
\normalsize
Since $\underset{n\rightarrow +\infty}{\lim} \ \underset{i=1,\ldots,N_{n}}{\sup}K_{i,n}=M<+\infty$
a.s. and, as shown in \cite{maller2008garch},
\[
\underset{t\in\left[0,T\right]}{\sup}{\sum_{i=1}^{N_{n}\left(t\right)}\left|\left(\mathbf{1}_{\tau_{i,n}<+\infty}\Delta L_{\tau_{i,n}}\right)^{2}-\epsilon_{i,n}^{2}\Delta t_{i,n}\right|}\stackrel{p}{\rightarrow}0
\]
as $n\rightarrow + \infty$, then $Q_{n}\stackrel{p}{\rightarrow}0$
that implies $\bar{Y}_{t,n}\stackrel{ucp}{\rightarrow}\tilde{Y}_{t,n}$.
\newline
We observe that, since the driven noise is a Compound Poisson, we
have only a finite number of jumps in a compact interval $\left[0,T\right].$
We indicate with $\tau_{k}$ the time of the $k$-th jump. Since the
irregular grid becomes finer as $n$ increases and satisfies
the following two conditions:
\begin{align*}
\Delta t_{n} & :=\underset{i=1,\ldots,N_{n}}{\max}\Delta t_{i,n}\underset{n\rightarrow+\infty}{\rightarrow}0\\
T & =\sum_{i=1}^{N_{n}}\Delta t_{i,n},
\end{align*}
then exists $n^{\star}$ such that for $n\geq n^{\star}$, all jump times $\tau_{k}\in\left\{ t_{0,n},t_{1,n},\ldots,t_{N_{n},n}\right\}$. The
COGARCH(p,q) state process $Y_{t}$ in \eqref{eq:StateProcYatTau} can
be defined equivalently $\forall\ n\geq n^{\star}$ as:
\begin{equation}
Y_{t_{i,n}}=C_{t_{i,n}}Y_{t_{i-1,n}}+D_{t_{i,n}}\label{eq:AlternativeCOGARHCPQ}
\end{equation}
with coefficients $C_{t_{i,n}}$ and $D_{t_{i,n}}$ defined as:
\begin{align*}
C_{t_{i,n}} & =\left(I+\Delta L_{t_{i,n}}^{2}\mathbf{ea}^{\top}\right)e^{B\Delta t_{i,n}}\\
D_{t_{i,n}} & =\alpha_{0}\Delta L_{t_{i,n}}^{2}\mathbf{e}.
\end{align*}
To show the ucp convergence of process $\tilde{Y}_{t,n}$ to $Y_{t}$,
we start observing that:
\begin{align}
\underset{t\in\left[0,T\right]}{\sup}\left\Vert Y_{t}-\tilde{Y}_{t,n}\right\Vert  & =\underset{t\in\left[0,T\right]}{\sup}\left\Vert e^{B\left(t-\Gamma_{t,n}\right)}\left(Y_{t_{i,n}}-\tilde{Y}_{i,n}\right)\right\Vert \nonumber \\
 & \leq e^{\left\Vert B\right\Vert _{M}T}\underset{i=1,\ldots,N_{n}}{\sup}\left\Vert \left(Y_{t_{i,n}}-\tilde{Y}_{i,n}\right)\right\Vert. \label{eq:LastInequalities}
\end{align}
We work on $\underset{i=1,\ldots,N_{n}}{\sup}\left\Vert \left(Y_{t_{i,n}}-\tilde{Y}_{i,n}\right)\right\Vert $
and for $i=1,\ldots,N_{n}$ and for fixed $n$ we have:
\begin{align}
\left\Vert \left(Y_{t_{i,n}}-\tilde{Y}_{i,n}\right)\right\Vert  & \leq\left\Vert \left(C_{t_{i,n}}Y_{t_{i-1,n}}-\tilde{C}_{i,n}\tilde{Y}_{i-1,n}\right)\right\Vert +\left\Vert D_{t_{i,n}}-\tilde{D}_{i,n}\right\Vert .\label{eq:LastIneq2}
\end{align}
The term $\left\Vert D_{t_{i,n}}-\tilde{D}_{i,n}\right\Vert $ in \eqref{eq:LastIneq2}
is bounded as follows:
\begin{equation}
\left\Vert D_{t_{i,n}}-\tilde{D}_{i,n}\right\Vert \leq\left|\alpha_{0}\right|\left|\Delta L_{t_{i,n}}^{2}-\mathbf{1}_{\tau_{i,n}<+\infty}\Delta L_{\tau_{i,n}}^{2}\right|.\label{eq:DiffD_t_i_mDtilde}
\end{equation}
Since
\begin{equation}
\Delta L_{t_{i,n}}^{2}:=\Delta L_{t_{i,n}}^{2}\mathbf{1}_{\left|\Delta L_{t_{i,n}}\right|\geq m_{n}}+\Delta L_{t_{i,n}}^{2}\mathbf{1}_{\left|\Delta L_{t_{i,n}}\right|<m_{n}},
\label{daric}
\end{equation}
the inequality in \eqref{eq:DiffD_t_i_mDtilde} becomes:
\begin{align}
\left\Vert D_{t_{i,n}}-\tilde{D}_{i,n}\right\Vert  & \leq\left|\alpha_{0}\right|\left|\Delta L_{t_{i,n}}^{2}\mathbf{1}_{0<\left|\Delta L_{t_{i,n}}\right|<m_{n}}\right|\nonumber \\
 & \leq m_{n}\left|\alpha_{0}\right|\left|\mathbf{1}_{\left|\Delta L_{t_{i,n}}\right|>0}\right|.\label{eq:ABC}
\end{align}
Inserting \eqref{eq:ABC} into \eqref{eq:LastIneq2}, we have:
\begin{align}
\left\Vert \left(Y_{t_{i,n}}-\tilde{Y}_{i,n}\right)\right\Vert  & \leq\left\Vert \left(C_{t_{i,n}}Y_{t_{i-1,n}}-\tilde{C}_{i,n}\tilde{Y}_{i-1,n}\right)\right\Vert +m_{n}\left|\alpha_{0}\right|\left|\mathbf{1}_{\left|\Delta L_{t_{i,n}}\right|>0}\right|\label{eq:LastIneq3}
\end{align}
We add and subtract the term $C_{t_{i,n}}\tilde{Y}_{i-1,n}$ into the
quantity $\left\Vert C_{t_{i,n}}Y_{t_{i-1,n}}-\tilde{C}_{i,n}\tilde{Y}_{i-1,n} \right\Vert $.
By exploiting the triangular inequality we obtain:
\footnotesize
\begin{align}
\left\Vert C_{t_{i,n}}Y_{t_{i-1,n}}-\tilde{C}_{i,n}\tilde{Y}_{i-1,n} \right\Vert  & \leq\left\Vert C_{t_{i,n}}Y_{t_{i-1,n}}-C_{t_{i,n}}\tilde{Y}_{i-1,n}\right\Vert +\left\Vert C_{t_{i,n}}\tilde{Y}_{i-1,n}-\tilde{C}_{i,n}\tilde{Y}_{i-1,n}\right\Vert \nonumber \\
 & \leq\left\Vert C_{t_{i,n}}\right\Vert _{M}\left\Vert Y_{t_{i-1,n}}-\tilde{Y}_{i-1,n}\right\Vert +\left\Vert C_{t_{i,n}}-\tilde{C}_{i,n}\right\Vert _{M}\left\Vert \tilde{Y}_{i-1,n}\right\Vert \nonumber \\
 & \leq\left\Vert C_{t_{i,n}}\right\Vert _{M}\left\Vert Y_{t_{i-1,n}}-\tilde{Y}_{i-1,n}\right\Vert \nonumber \\
 & +\left|\Delta L_{t_{i,n}}^{2}-\mathbf{1}_{\tau_{i,n}<+\infty}\Delta L_{\tau_{i,n}}^{2}\right|\left\Vert \mathbf{ea}^{\top}\right\Vert _{M}e^{\left\Vert B\right\Vert _{M}\Delta t_{i,n}}\left\Vert \tilde{Y}_{i-1,n}\right\Vert .\label{eq:CDE}
\end{align}
\normalsize
Defining:
\[
C_{t_{i,n}}^{\star\star}:=\left(1+\Delta L_{t_{i,n}}^{2}\left\Vert \mathbf{ea}^{\top}\right\Vert _{M}\right)e^{\left\Vert B\right\Vert _{M}\Delta t_{i,n}}\geq\left\Vert C_{t_{i,n}}\right\Vert _{M},
\]
substituting \eqref{eq:CDE} into \eqref{eq:LastIneq3} and using the same arguments as in \eqref{daric} and \eqref{eq:ABC}, we obtain:
\begin{align*}
\left\Vert Y_{t_{i,n}}-\tilde{Y}_{i,n}\right\Vert  & \leq C_{t_{i,n}}^{\star\star}\left\Vert Y_{t_{i-1,n}}-\tilde{Y}_{i-1,n}\right\Vert +m_{n}\left|\alpha_{0}\right|\left|\mathbf{1}_{\left|\Delta L_{t_{i,n}}\right|>0}\right|\\
 & +\left|\Delta L_{t_{i,n}}^{2}-\mathbf{1}_{\tau_{i,n}<+\infty}\Delta L_{\tau_{i,n}}^{2}\right|\left\Vert \mathbf{ea}^{\top}\right\Vert _{M}e^{\left\Vert B\right\Vert _{M}\Delta t_{i,n}}\left\Vert \tilde{Y}_{i-1,n}\right\Vert .
\end{align*}
Using $K_{i,n}$ in \eqref{eq:K_i_n}, we have:
\begin{align}
\left\Vert Y_{t_{i,n}}-\tilde{Y}_{i,n}\right\Vert  & \leq C_{t_{i,n}}^{\star\star}\left\Vert Y_{t_{i-1,n}}-\tilde{Y}_{i-1,n}\right\Vert +m_{n}K_{i-1,n}\left|\mathbf{1}_{\left|\Delta L_{t_{i,n}}\right|>0}\right|.\label{eq:LastIneq4}
\end{align}
We introduce a stochastic recurrence equation on the grid $\left\{ t_{i,n}\right\} _{i=0,\ldots,N_{n}}$ defined as
as:
\[
\zeta_{i,n}=C_{t_{i,n}}^{\star\star}\zeta_{i-1,n}+m_{n}K_{i-1,n}\mathbf{1}_{\left|\Delta L_{t_{i,n}}\right|>0}
\]
with initial condition $\zeta_{0,n}:=\left\Vert Y_{t_{0,n}}-\tilde{Y}_{0,n}\right\Vert =0$
a.s.. Since $\forall i\ C_{i,n}^{\star\star}\geq1$ and $m_{n}K_{i-1,n}^{\star}\mathbf{1}_{\left|\Delta L_{t_{i,n}}\right|>0}\geq0$
a.s., $\zeta_{i,n}$ is a non decreasing process that is an upper
bound for $\left\Vert Y_{t_{i,n}}-\tilde{Y}_{i,n}\right\Vert $ for each fixed $i$ then:
\footnotesize
\begin{align}
\underset{i=1,\ldots,N_{n}}{\sup}\left\Vert Y_{t_{i,n}}-\tilde{Y}_{i,n}\right\Vert  & \leq\left[\prod_{i=0}^{N_{n}-1}C_{N_{n}-i,n}^{\star\star}\right]\left\Vert Y_{t_{0,n}}-\tilde{Y}_{0,n}\right\Vert \nonumber \\
 & +m_{n}\left\{ \sum_{i=1}^{N_{n}-1}\left[\prod_{h=1}^{i}C_{N_{n}+i-h,n}^{\star\star}\right]\mathbf{1}_{\left|\Delta L_{t_{N_{n}-i,n}}\right|>0}K_{N_{n}-1-i,n}+\mathbf{1}_{\left|\Delta L_{t_{N_{n},n}}\right|>0}K_{N_{n}-1,n}\right\} \label{eq:LastIneq5}
\end{align}
\normalsize
The right-hand side in \eqref{eq:LastIneq5} is non-negative as a summation
of non-negative terms. We split it into two parts:
\footnotesize
\begin{align*}
G_{n} & :=\left[\prod_{i=0}^{N_{n}-1}C_{N_{n}-i,n}^{\star\star}\right]\left\Vert Y_{t_{0,n}}-\tilde{Y}_{0,n}\right\Vert \\
W_{n} & :=m_{n}\left\{ \sum_{i=1}^{N_{n}-1}\left[\prod_{h=1}^{i}C_{N_{n}+i-h,n}^{\star\star}\right]\mathbf{1}_{\left|\Delta L_{t_{N_{n}-i,n}}\right|>0}K_{N_{n}-1-i,n}+\mathbf{1}_{\left|\Delta L_{t_{N_{n},n}}\right|>0}K_{N_{n}-1,n}\right\}. 
\end{align*}
\normalsize
Using the same arguments as in \eqref{eq:PROVA}, we can say that:
\[
G_{n}=0\ \texttt{a.s.}\ \forall n\geq0
\]
and
\[
W_{n}\stackrel{n\rightarrow+\infty}{\rightarrow}0
\]
 since for $n\rightarrow+\infty$, the quantity
\[
\sum_{i=1}^{N_{n}-1}\left[\prod_{h=1}^{i}C_{N_{n}+i-h,n}^{\star\star}\right]\mathbf{1}_{\left|\Delta L_{t_{N_{n}-i,n}}\right|>0}K_{N_{n}-1-i,n}+\mathbf{1}_{\left|\Delta L_{t_{N_{n},n}}\right|>0}K_{N_{n}-1,n}
\]
is composed by a finite number of terms and then it finite a.s. for
the same arguments in \eqref{eq:Prova2}. In conclusion we have:
\[
\underset{i=1,\ldots,N_{n}}{\sup}\left\Vert Y_{t_{i,n}}-\tilde{Y}_{i,n}\right\Vert \leq G_{n}+W_{n}\underset{n\rightarrow+\infty}{\rightarrow}0
\]
that implies 
\begin{equation}
Y_{t,n}\stackrel{ucp}{\rightarrow}Y_{t}\label{eq:IMPORTANTE2}
\end{equation}
 where $Y_{t,n}$ is the constant piecewise process associated to
the process $Y_{i,n}$ defined in \eqref{eq:IMPORTANTEYI_N}. From \eqref{eq:IMPORTANTE2}
we obtain the ucp convergence of process $V_{i,n}$ to the COGARCH(p,q)
variance process $V_{t}$. The remaing part of the proof follows the
same steps as in \cite{maller2008garch}
\end{proof} 
The result can be generalized to any COGARCH(p,q) model driven by a finite variation L\'evy process since, as shown in \cite{brockwell2006}, a COGARCH(p,q) driven by a general L\'evy can by approximated by the same COGARCH(p,q) process driven by a Compound Poisson. Then using the triangular inequality, the discrete process $\left(G_{i,n},V_{i,n}\right)$ converges in the Skorokhod metric and in probability to any COGARCH(p,q) model.

\section{Maximum Pseudo-Loglikelihood Estimation for the COGARCH(p,q) process}

In this Section we show how to extend the maximum pseudo-loglikelihood
estimation procedure developed in \cite{maller2008garch} for
the COGARCH(1,1) model to the higher order case. We use the approximation scheme proposed in Section \ref{sec:2}
 as a generalization of the approach in \cite{maller2008garch} and used 
recently also in \cite{behme2014} for the GRJ-COGARCH(1,1) model. First of all, we recall the variance of the integrated COGARCH(p,q)
model on the interval $\left[t_{i-1},t_{i}\right]$ \citep[see][ for derivation of higher order moment of a COGARCH(p,q) process]{Chadraa2010Thesis}.
\newline
On the irregular grid 
\begin{equation}
0=t_{0}<t_{1}<\ldots<t_{N}=T\label{eq:IrregGridForPseudo}
\end{equation}
 we consider the increment of a COGARCH(p,q) process defined as:
\[
\Delta G_{t_{i}}:=G_{t_{i}}-G_{t_{i-1}}=\int_{t_{i-1}}^{t_{i}}V_{u}\mbox{d}L_{u}
\]
As shown in \cite{Chadraa2010Thesis}, the conditional first moment and the conditional variance are
respectively:

\begin{equation}
\begin{array}{l}
E\left[\Delta G_{t_{i}}\left|\mathcal{F}_{t_{i-1}}\right.\right]=0\\
Var\left[\Delta G_{t_{i}}\left|\mathcal{F}_{t_{i-1}}\right.\right]=E\left[L_1\right]\left[\frac{\alpha_{0}\Delta t_{i}b_{q}}{b_{q}-a_{1}\mu}+\mathbf{a}^{\top}e^{\tilde{B}\Delta t_{i}}\tilde{B}^{-1}\left(I-e^{-\tilde{B}\Delta t_{i}}\right)\left(Y_{t_{i-1}}-E\left(Y_{t_{i-1}}\right)\right)\right]
\end{array}\label{eq:VarianceCondForPseudo}
\end{equation}
where $\tilde{B}:=B+\mu\mathbf{ea}^{\top}$, $\mu=\int_{\mathcal{R}}y^{2}\mbox{d}\nu_{L}\left(y\right)$
and $\nu_{L}\left(y\right)$ is the L\'evy measure of the process $L_{t}$
for simplicity we require the underlying process to be centered in zero
with unitary second moment $\mu=E\left(L_{1}\right)=1$. Under
the assumption that guarantees the existence of the stationary mean of process $Y_{t}$ 
\citep[see][]{brockwell2006} we have:
\[
E\left(Y_{t}\right)=\frac{\alpha_{0}\mu}{b_{q}-a_{1}\mu}\left[\begin{array}{l}
1\\
0\\
\vdots\\
0
\end{array}\right].
\]
On the discrete grid in \eqref{eq:IrregGridForPseudo} we construct
the discrete process $G_{i,n}$ introduced in \eqref{eq:G_i_n}, in
particular we rewrite the state process $Y_{i,n}$ in \eqref{eq:IMPORTANTEYI_N} as follows:
\begin{align}
Y_{i,n} & =\left(I+\Delta t_{i,n}\epsilon_{i,n}^{2}\mathbf{ea}^{\top}\right)e^{B\Delta t_{i,n}}Y_{i-1,n}+\alpha_{0}\Delta t_{i,n}\epsilon_{i,n}^{2}\mathbf{e}\nonumber \\
 & =\left(I+\frac{\left(G_{i,n}-G_{i-1,n}\right)^{2}}{V_{i-1,n}}\mathbf{ea}^{\top}\right)e^{B\Delta t_{i,n}}Y_{i-1,n}+\alpha_{0}\frac{\left(G_{i,n}-G_{i-1,n}\right)^{2}}{V_{i-1,n}}\mathbf{e}\nonumber \\
 & =\left(I+\frac{\left(G_{i,n}-G_{i-1,n}\right)^{2}}{\alpha_{0}+\mathbf{a}^{\top}Y_{i-1,n}}\mathbf{ea}^{\top}\right)e^{B\Delta t_{i,n}}Y_{i-1,n}+\alpha_{0}\frac{\left(G_{i,n}-G_{i-1,n}\right)^{2}}{\alpha_{0}+\mathbf{a}^{\top}Y_{i-1,n}}\mathbf{e}.\label{eq:DynamicYinForPseudoLogLik}
\end{align}
Using the results \eqref{eq:VarianceCondForPseudo} and \eqref{eq:DynamicYinForPseudoLogLik},
we are able to generalize the pseudo-likelihood estimation procedure
in \cite{maller2008garch} for the case of the COGARCH(p,q) model. The
idea behind the pseudo-loglikelihood is based on the markovian property
of the pair $\left(G_{t},V_{t}\right)$ and the substitution of the
real transition density with the normality assumption with mean and
variance determined as in \eqref{eq:VarianceCondForPseudo}. Therefore
the maximum pseudo-loglikelihood estimates are obtained as solution
of the following optimization problem:
\[
\begin{array}{l}
\underset{\mathbf{a},\alpha_{0},B\in\Theta}{\max}\mathcal{L}_{N}\left(\mathbf{a},\alpha_{0},B\right)\\
\texttt{s.t.}\\
\left\{ \begin{array}{l}
Y_{i,n}=\left(I+\frac{\left(G_{i,n}-G_{i-1,n}\right)^{2}}{\alpha_{0}+\mathbf{a}^{\top}Y_{i-1,n}}\mathbf{ea}^{\top}\right)e^{B\Delta t_{i,n}}Y_{i-1,n}+\alpha_{0}\frac{\left(G_{i,n}-G_{i-1,n}\right)^{2}}{\alpha_{0}+\mathbf{a}^{\top}Y_{i-1,n}}\mathbf{e}\\
i=0,1,\ldots,N
\end{array}\right.
\end{array}
\]
where
\[
\mathcal{L}_{N}\left(\mathbf{a},\alpha_{0},B\right)=-\frac{1}{2}\sum_{i=1}^{N}\left(\frac{\left(\Delta G_{t_{i}}\right)^{2}}{Var\left[\Delta G_{t_{i}}\left|\mathcal{F}_{t_{i-1}}\right.\right]}+\ln\left(Var\left[\Delta G_{t_{i}}\left|\mathcal{F}_{t_{i-1}}\right.\right]\right)\right)-\frac{N\ln\left(2\pi\right)}{2}
\]
 and the set $\Theta$ contains the model parameters that ensure
the stationarity, the existence of the mean of the state process $Y_{t}$ and the non-negativity of process $V_t$.

%
%
%
%
%
%
%
%


\end{document}